\documentclass[12pt,english,a4paper]{smfart}

\usepackage[T1]{fontenc}
\usepackage{lmodern}
\usepackage{smfthm}
\usepackage[headings]{fullpage}
\usepackage{amssymb,amscd}

\newcommand{\Qp}{\mathbf{Q}_p}
\newcommand{\Cp}{\mathbf{C}_p}
\newcommand{\ZZ}{\mathbf{Z}}
\newcommand{\OO}{\mathcal{O}}
\newcommand{\MM}{\mathfrak{m}}

\newcommand{\Fpbar}{\overline{\mathbf{F}}_p}
\renewcommand{\phi}{\varphi}
\newcommand{\phiq}{\varphi_q}
\renewcommand{\geq}{\geqslant}
\renewcommand{\leq}{\leqslant} 
\newcommand{\bigO}{\mathrm{O}}
\newcommand{\calF}{\mathcal{F}}
\newcommand{\Gal}{\mathrm{Gal}}
\newcommand{\End}{\mathrm{End}}

\newcommand{\LT}{\mathrm{LT}}
\newcommand{\Emb}{\Sigma}
\newcommand{\Fil}{\mathrm{Fil}}
\newcommand{\val}{\mathrm{val}}
\newcommand{\unr}{\mathrm{unr}}
\newcommand{\Log}{\mathrm{L}_{\mathcal{F}}}
\newcommand{\atplus}{\tilde{\mathbf{A}}^+}
\newcommand{\etplus}{\widetilde{\mathbf{E}}^+}
\newcommand{\bdr}{\mathbf{B}_{\mathrm{dR}}}  
\newcommand{\bcris}{\mathbf{B}_{\mathrm{cris}}}  
\newcommand{\dcroc}[1]{[\![ #1 ]\!]}
\newcommand{\ubar}{\overline{u}}
\newcommand{\wideg}{\operatorname{wideg}}

\author{Laurent Berger}
\address{UMPA de l'ENS de Lyon \\
UMR 5669 du CNRS \\ IUF}
\email{laurent.berger@ens-lyon.fr}
\urladdr{perso.ens-lyon.fr/laurent.berger/}

\title{Lubin's conjecture for full $p$-adic dynamical systems}

\alttitle{La conjecture de Lubin pour les syst\`emes dynamiques $p$-adiques pleins}

\date{\today}

\begin{document}

\begin{abstract}
We give a short proof of a conjecture of Lubin concerning certain families of $p$-adic power series that commute under composition. We prove that if the family is \emph{full} (large enough),  there exists a Lubin-Tate formal group such that all the power series in the family are endomorphisms of this group. The proof uses ramification theory and some $p$-adic Hodge theory.
\end{abstract}

\begin{altabstract}
Nous donnons une d\'emonstration courte d'une conjecture de Lubin concernant certaines familles de s\'eries formelles $p$-adiques qui commutent pour la composition. Nous montrons que si la famille est \emph{pleine} (assez grosse), il existe un groupe formel de Lubin-Tate tel que toutes les s\'eries de la famille sont des endomorphismes de ce groupe. La d\'emonstration utilise la th\'eorie de la ramification et un peu de th\'eorie de Hodge $p$-adique.
\end{altabstract}

\subjclass{11S82 (11S15; 11S20; 11S31; 13F25; 13F35; 14F30)}

\keywords{$p$-adic dynamical system; Lubin-Tate formal group; $p$-adic Hodge theory}

\maketitle

\setlength{\baselineskip}{18pt}

\section*{Introduction}\label{intro}

Let $K$ be a finite extension of $\Qp$, and let $\OO_K$ be its ring of integers. In \cite{L94}, Lubin studied \emph{$p$-adic dynamical systems}, namely families of elements of $T \cdot \OO_K \dcroc{T}$ that commute under composition, and remarked that ``experimental evidence seems to suggest that for an invertible series to commute with a noninvertible series, there must be a formal group somehow in the background''. This observation has motivated the work of a number of people (Hsia, Laubie, Li, Movaheddi, Salinier, Sarkis, Specter, ...) who proved various results in that direction. The purpose of this note is to give a proof of a special case of the above observation, which is referred to as ``Lubin's conjecture'' in \S 3.1 of \cite{GS10}. Let us consider a family $\calF$ of commuting power series $F(T) \in T \cdot \OO_K \dcroc{T}$. We say that such a family is \emph{full} if for all $\alpha \in \OO_K$ there exists $F_\alpha(T) \in \calF$ such that $F_\alpha'(0) = \alpha$ and if $\wideg(F_\pi(T)) = q$, where $\wideg(F(T))$ denotes the Weierstrass degree of $F(T)$, $\pi$ is any uniformizer of $\OO_K$ and $q$ is the cardinality of the residue field of $\OO_K$.

\begin{enonce*}{Theorem}
If $\calF$ is a full family of commuting power series, there exists a Lubin-Tate formal group $G$ such that $F_\alpha(T) \in \End(G)$ for all $\alpha \in \OO_K$.
\end{enonce*}

This result already appears as theorem 2 of \cite{HL15}. Our proof is however considerably shorter than that of ibid., and does not use the theory of the field of norms. It is very similar to that of the main result of \cite{JS15}, which treats the case $K=\Qp$. The main ingredients are  ramification theory and some $p$-adic Hodge theory. In order to simplify the use of $p$-adic Hodge theory, we assume that $K$ is a Galois extension of $\Qp$. 

\section{$p$-adic dynamical systems}
\label{serlub}

In this note, we consider a set $\calF = \{ F_\alpha(T) \}_{\alpha \in \OO_K}$ of power series $F_\alpha(T) \in T \cdot \OO_K \dcroc{T}$ such that $F_\alpha'(0) = \alpha$ and $F_\alpha \circ F_\beta (T) = F_\beta \circ F_\alpha (T)$ whenever $\alpha$, $\beta \in \OO_K$. Recall that $\pi$ is a uniformizer of $\OO_K$, and that $q$ is the cardinality of the residue field $k$ of $\OO_K$. If $F(T)$ is a power series and $n \geq 0$, we denote by $F^{\circ n}(T)$ the $n$-th fold iteration $F \circ \cdots \circ F(T)$. If $F(T)$ has an inverse for the composition, this definition extends to $n \in \ZZ$. Recall that the \emph{Weierstrass degree} $\wideg(F(T))$ of $F(T) = \sum_{i=1}^{+\infty} f_i T^i \in T \cdot \OO_K \dcroc{T}$ is the smallest integer $i$ such that $f_i \in \OO_K^\times$. By the Weierstrass preparation theorem, if $\wideg(F) \neq +\infty$, then $F$ has $\wideg(F)$ zeroes in $\MM_{\Cp}$.

\begin{prop}
\label{serg}
There exists a power series $G(T) \in T \cdot k \dcroc{T}$ and an integer $d \geq 1$ such that $G'(0) \in k^\times$ and $F_\pi(T) \equiv G(T^{p^d})$.
\end{prop}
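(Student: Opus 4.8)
The statement concerns only the reduction $\bar F_\pi := F_\pi \bmod \MM_K \in k\dcroc T$. Since $\wideg(F_\pi)=q$, the coefficients of $T,\dots,T^{q-1}$ in $F_\pi$ lie in $\MM_K$ while that of $T^q$ is a unit, so $\bar F_\pi(T)=\sum_{i\ge q}c_iT^i$ with $c_q\in k^\times$. Comparing lowest-order terms, the proposition is equivalent to the assertion that $c_i=0$ whenever $q\nmid i$: one then takes $d=[k:\mathbf{F}_p]$, so $q=p^d$, and $G(S)=\sum_{j\ge 1}c_{jq}S^j$, which has $G'(0)=c_q\ne 0$ and satisfies $F_\pi\equiv G(T^{p^d})$. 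Since $x\mapsto x^q$ is the identity on $k$, this is the same as $\bar F_\pi$ being a $q$-th power in $k\dcroc T$; writing $\bar F_\pi(T)=T^q u(T)$ with $u(0)\ne 0$, it unwinds to the vanishing of the formal derivatives of $\bar F_\pi^{1/p^j}$ for $0\le j<d$. This suggests an induction on $d$, whose heart is the case $d=1$: the vanishing of $\bar F_\pi'(T)$, i.e.\ $\bar F_\pi\in k\dcroc{T^p}$.

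The only tool is the commutation of $\bar F_\pi$ with the other $\bar F_\alpha$, and the first step I would take is to produce a nontrivial ``unipotent'' companion of $\bar F_\pi$. Consider $F_{1+\pi}$, so $\bar F_{1+\pi}'(0)=1\in k^\times$, and suppose for contradiction that $\bar F_{1+\pi}=T$. Then $F_{1+\pi}(T)=T+\pi g(T)$ with $g\in T\OO_K\dcroc T$ and $g'(0)=1$; expanding $F_\pi\circ F_{1+\pi}=F_{1+\pi}\circ F_\pi$ and dividing by $\pi$ gives, modulo $\pi$, the identity $\bar g(T)\,\bar F_\pi'(T)=\bar g(\bar F_\pi(T))$ in $k\dcroc T$. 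Writing $\bar g(T)=Te(T)$ with $e(0)=1$ and using $\bar F_\pi(T)=T^q u(T)$, hence $\bar F_\pi'(T)=T^q u'(T)$ in characteristic $p$, this reads $Te(T)u'(T)=u(T)\,e(\bar F_\pi(T))$; evaluating at $T=0$ gives $0=u(0)=c_q$, a contradiction. The same computation applies with any uniformizer in place of $\pi$, so the reduced family does contain genuine elements of $T+T^2k\dcroc T$ commuting with $\bar F_\pi$.

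Now let $\sigma\ne T$ be such a companion, $\sigma(T)=T+aT^{m+1}+\cdots$ with $a\ne 0$. Expanding $\sigma\circ\bar F_\pi=\bar F_\pi\circ\sigma$ and matching coefficients degree by degree forces $i\,c_i=0$ for $q<i<q+m(q-1)$, hence $c_i=0$ for such $i$ with $p\nmid i$. Replacing $\sigma$ by $\sigma^{\circ p^k}$, whose depth $m$ grows without bound unless $\sigma$ has finite ($p$-power) order, one obtains $c_i=0$ for \emph{all} $i$ with $p\nmid i$, i.e.\ $\bar F_\pi\in k\dcroc{T^p}$. To pass from this to the full statement, write $\bar F_\pi(T)=\psi(T^p)$: then $\wideg\psi=q/p$ and, reducing the commutations modulo $p$-th powers, $\psi$ satisfies the hypotheses of the proposition relative to $k$ up to a Frobenius twist of $\{\bar F_\alpha\}$, so the case $d$ follows from the case $d-1$; for $d=1$ we are already done since $k\dcroc{T^p}=k\dcroc{T^q}$.

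The main obstacle is guaranteeing, in the previous paragraph, a companion $\sigma$ that does \emph{not} have bounded $p$-power order — the obvious candidates, such as $\bar F_\zeta^{\circ(q-1)}$ for $\zeta$ a Teichmüller lift of a generator of $k^\times$, may well be trivial (this already happens in the Lubin--Tate case) — so that the depth estimate actually reaches every exponent. Here fullness must be used more seriously: one can bring in the commutation with $\bar F_\zeta$ (which forces the depths of companions to be divisible by $q-1$) and the finiteness of the set of zeros of $F_\pi$ in $\MM_{\Cp}$, since an invertible $F_\alpha$ permutes that finite set, so a suitable iterate $F_\alpha^{\circ n}$ is congruent to $T$ modulo $F_\pi(T)$ and hence reduces to a companion of depth $\ge q-1$. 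Carrying out the Frobenius-twisted descent on $d$ is a further, more routine, point.
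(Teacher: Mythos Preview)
The paper gives no self-contained argument here; it simply refers to Theorem~6.3 and Corollary~6.2.1 of \cite{L94}. Your sketch is very much in the spirit of Lubin's proof of Theorem~6.3, and the opening computation showing $\bar F_{1+\pi}\neq T$ is correct. But, as you yourself flag, the argument stalls at the decisive point.

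The gap you name is precisely the extra hypothesis in Lubin's Theorem~6.3: that result requires the noninvertible series to commute with an invertible series \emph{of infinite order} in characteristic~$p$. Your contradiction produces a nontrivial $\sigma=\bar F_{1+\pi}$ but says nothing about its order; since $F_{1+\pi}^{\circ p}=F_{(1+\pi)^p}$ (both commute with $F_\pi$ and have the same linear term, hence coincide via $\Log$) and $(1+\pi)^p-1$ is no longer a uniformizer, your $\bar g$-argument does not iterate to exclude $\sigma^{\circ p}=T$. The devices you propose at the end --- depth divisible by $q-1$, or permuting the finitely many zeros of $F_\pi$ --- manufacture companions of some \emph{fixed} depth, not of unbounded depth, so they do not close the gap. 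In \cite{L94} this is handled by a separate argument exploiting the characteristic-zero origin of the family (this is why the paper also cites Corollary~6.2.1), and it is not a formality. Two smaller points: your Frobenius-twisted descent on $d$ lands you with a \emph{semi}-commutation $\psi\circ\bar F_\alpha^{\phi}=\bar F_\alpha\circ\psi$, which is not the hypothesis you started from, so the induction needs more than a sentence; and you invoke $\wideg(F_\pi)=q$ from the outset, whereas in the paper that is the fullness hypothesis, imposed only \emph{after} this proposition and used to pin down $p^d=q$ a posteriori.
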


\begin{proof}
See (the proof of) theorem 6.3 and corollary 6.2.1 of \cite{L94}.
\end{proof}

\begin{prop}
\label{lublog}
There exists a power series $\Log(T) \in K \dcroc{T}$ such that
\begin{enumerate}
\item $\Log(T) = T + \bigO(T^2)$;
\item $\Log(T)$ converges on the open unit disk;
\item $\Log \circ F_\alpha(T) = \alpha \cdot \Log(T)$ for all $\alpha \in \OO_K$.
\end{enumerate}
\end{prop}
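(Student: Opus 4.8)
The plan is to build $\Log$ out of $F_\pi$ alone---this is essentially the classical construction of the Lubin--Tate-type logarithm---and then to propagate the functional equation to every $\alpha$ using that $F_\alpha$ commutes with $F_\pi$. Write $F_\pi(T) = \pi T + \sum_{i \geq 2} a_i T^i$ with $a_i \in \OO_K$, and look for $\Log(T) = \sum_{k \geq 1} \ell_k T^k$ with $\ell_1 = 1$ satisfying $\Log \circ F_\pi(T) = \pi \cdot \Log(T)$. Expanding $\Log(F_\pi(T)) = \sum_{m \geq 1} \ell_m F_\pi(T)^m$, the coefficient of $T^k$ equals $\pi^k \ell_k + \sum_{m=1}^{k-1} b_{m,k} \ell_m$, where $b_{m,k} \in \OO_K$ is the coefficient of $T^k$ in $F_\pi(T)^m$; the desired relation therefore amounts to
\[ (\pi^k - \pi)\, \ell_k = -\sum_{m=1}^{k-1} b_{m,k}\, \ell_m \qquad (k \geq 2). \]
Since $\pi^k - \pi = \pi(\pi^{k-1} - 1)$ with $\pi^{k-1}-1 \in \OO_K^\times$, this recursion defines $\ell_k \in K$ uniquely, so $\Log(T) \in K\dcroc{T}$ exists and satisfies (1) and (3) for $\alpha = \pi$ by construction. (One could equivalently set $\Log(T) = \lim_{n \to \infty} \pi^{-n} F_\pi^{\circ n}(T)$.) The same comparison of coefficients yields the uniqueness statement that makes everything else work: if $h(T) \in K\dcroc{T}$ satisfies $h \circ F_\pi = \pi h$, then $h = h'(0) \cdot \Log$, because $h - h'(0)\Log$ obeys the same linear relation and vanishes to order $\geq 2$, and if it were nonzero, say $= cT^k + \bigO(T^{k+1})$ with $k \geq 2$, comparing lowest-order terms would force $c\pi^k = \pi c$, hence $c = 0$.

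For (2) I would first extract from the recursion the crude bound $\ell_k \in \pi^{1-k}\OO_K$ (induction on $k$), which already shows that $\Log$ converges on the disk $|T| < |\pi|$. To pass from there to the full open unit disk I would bootstrap with the iterated identity $\Log(T) = \pi^{-n}\Log\bigl(F_\pi^{\circ n}(T)\bigr)$, valid for all $n \geq 0$ as an identity of formal power series. Because $F_\pi(T) \in T \cdot \OO_K\dcroc{T}$ with $F_\pi'(0) = \pi$, one has $|F_\pi(x)| \leq \max\bigl(|\pi|\cdot|x|,\, |x|^2\bigr)$ for every $x \in \MM_{\Cp}$; hence, for any $\rho < 1$, the radii defined by $\rho_0 = \rho$ and $\rho_{m+1} = \max(|\pi|\rho_m, \rho_m^2)$ strictly decrease to $0$, and there is an $N$ with $|F_\pi^{\circ N}(x)| < |\pi|$ for all $|x| \leq \rho$. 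Substituting the convergent series $F_\pi^{\circ N}$ into $\Log$ is then legitimate, and the identity $\pi^N \Log(T) = \Log(F_\pi^{\circ N}(T))$ shows $\Log$ converges on $|T| \leq \rho$; letting $\rho \to 1$ gives (2).

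Finally, for (3) with arbitrary $\alpha \in \OO_K$, put $h(T) = \Log \circ F_\alpha(T)$. From $F_\alpha \circ F_\pi = F_\pi \circ F_\alpha$ and $\Log \circ F_\pi = \pi \Log$ we get $h \circ F_\pi = \Log \circ F_\pi \circ F_\alpha = \pi \cdot \Log \circ F_\alpha = \pi h$, while $h'(0) = \Log'(0)\,F_\alpha'(0) = \alpha$; the uniqueness statement from the first paragraph then forces $h = \alpha \Log$, i.e. $\Log \circ F_\alpha(T) = \alpha \cdot \Log(T)$. I expect the only genuinely delicate point to be the convergence claim in (2)---specifically the step upgrading convergence on a small disk to convergence on the entire open unit disk via the contracting behaviour of the iterates $F_\pi^{\circ n}$; the existence of $\Log$ and the functional equations for general $\alpha$ are then essentially forced by the recursion and the uniqueness lemma.
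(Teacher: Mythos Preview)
Your argument is correct and is essentially the one the paper has in mind: the paper simply cites Lubin's propositions 1.2 and 2.2 for the existence, convergence, and uniqueness of $\Log$ relative to the uniformizer (which is exactly the recursion/limit construction and bootstrap you wrote out), and then deduces (3) for arbitrary $\beta$ by observing that $\beta^{-1}\Log\circ F_\beta$ again satisfies (1)--(3) for $\alpha=\pi$ and invoking uniqueness---the same uniqueness mechanism you use with $h=\Log\circ F_\alpha$. Your version even handles $\alpha=0$ more cleanly, since your uniqueness lemma applies directly without dividing by $\alpha$.
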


\begin{proof}
See propositions 1.2 and 2.2 of \cite{L94} for the construction of a unique power series $\Log(T)$ that satisfies (1), (2) and (3) for $\alpha$ a uniformizer of $\OO_K$. If $\beta \in \OO_K \setminus \{ 0 \}$, then $\beta^{-1} \cdot \Log \circ F_\beta$ also satisfies (1), (2) and (3) for $\alpha$ as above, so that $\Log \circ F_\beta(T) = \beta \cdot \Log(T)$ for all $\beta \in \OO_K$.
\end{proof}

The hypothesis that $\calF$ is full implies that $p^d=q$, so that $\wideg(F_\pi(T)) = q$. For $n \geq 1$, let $\Lambda_n$ denote the set of $u \in \MM_{\Cp}$ such that $F_\pi^{\circ n}(u) =0$ and $F_\pi^{\circ n-1}(u) \neq 0$ and let $\Lambda_\infty = \cup_{n \geq 1} \Lambda_n$. Proposition \ref{serg} implies that $F_\pi'(T)/  \pi$ is a unit of $\OO_K\dcroc{T}$, so that the roots of $F_\pi^{\circ n}(T)$ are simple for all $n \geq 1$. The set $\Lambda_n$ therefore has $q^{n-1}(q-1)$ elements.

The series $F_\alpha(T)$ is invertible if $\alpha \in \OO_K^\times$ so that in this case, $F_\alpha(z) = 0$ if and only if $z=0$. If $u \in \Lambda_n$ and $\alpha \in \OO_K^\times$, then $F_\pi^{\circ n} \circ F_\alpha(u) = F_\alpha \circ F_\pi^{\circ n}(u) = 0$ and $F_\pi^{\circ n-1} \circ F_\alpha(u) = F_\alpha \circ F_\pi^{\circ n-1}(u) \neq 0$  so that the action of $F_\alpha(T)$ permutes the elements of $\Lambda_n$.

Let $K_n=K(\Lambda_n)$, so that $\Lambda_i \subset K_n$ if $i \leq n$, and let $K_\infty = \cup_{n \geq 1} K_n$. If $\alpha \in \OO_K^\times$, let $n(\alpha)$ be the largest integer $n \geq 0$ such that $\alpha \in 1+\pi^n \OO_K$.

\begin{prop}
\label{sizelam}
If $n \geq 1$ and $u \in \Lambda_n$, then
\begin{enumerate}
\item $F_\alpha(u) = u$ if and only if $n(\alpha) \geq n$;
\item If $n(\alpha) =n$, then $\wideg(F_\alpha(T)-T) = q^n$;
\item $\Lambda_n = \{ F_\alpha(u) \}_{\alpha \in \OO_K^\times}$.
\end{enumerate}
\end{prop}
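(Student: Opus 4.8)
The first step is to linearise the dynamical system. Since $\Log(T) = T + \bigO(T^2)$, it admits a compositional inverse $E(T) \in K\dcroc{T}$ with $E(T) = T + \bigO(T^2)$; by Proposition~\ref{lublog}(3) one has $F_\gamma(T) = E(\gamma\Log(T))$ for every $\gamma \in \OO_K$ (both sides are the unique element of $T\cdot K\dcroc{T}$ carried to $\gamma\Log(T)$ by left composition with $\Log$), and hence $F_\alpha \circ F_\beta = E(\alpha\beta\Log(T)) = F_{\alpha\beta}$. In particular $F_\pi^{\circ n} = F_{\pi^n}$, $F_1(T) = T$, the series $F_\gamma$ with $\gamma \in \OO_K^\times$ are invertible (with inverse $F_{\gamma^{-1}}$), and $\alpha \mapsto (u \mapsto F_\alpha(u))$ is a continuous group homomorphism $\rho_n \colon \OO_K^\times \to \mathfrak{S}(\Lambda_n)$. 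Write $\Lambda_n^+ = \{0\} \cup \Lambda_1 \cup \cdots \cup \Lambda_n = (F_\pi^{\circ n})^{-1}(0)$; this set has $q^n$ elements, which are simple zeros of $F_\pi^{\circ n}$ because $F_\pi'/\pi$, hence $(F_\pi^{\circ n})'/\pi^n$, is a unit of $\OO_K\dcroc{T}$, so $\wideg(F_\pi^{\circ n}) = q^n$. The plan is to prove the three assertions simultaneously by induction on $n$, establishing them in the \emph{cyclic} order (1)$\Rightarrow$, then (3), then (1)$\Leftarrow$, then (2) at level $n$; there is no vicious circle, since the only step looking backwards, namely (1)$\Rightarrow$, appeals to (2) at levels \emph{strictly below} $n$.

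Before running the induction I record two inputs coming from Newton polygons. First, an induction on $n$ using the Newton polygon of $F_\pi(T) - w$ for $w \in \Lambda_{n-1}$ (together with $\wideg(F_\pi) = q$ and $F_\pi'(0) = \pi$) shows that every $u \in \Lambda_n$ satisfies $\val(u) = 1/(q^{n-1}(q-1))$; in particular $\sum_{u \in \Lambda_n^+,\, u \neq 0} \val(u) = n$. Second, for $\gamma \in \OO_K^\times$ the series $F_\gamma(T)/T$ has unit constant term $\gamma$, so $F_\gamma$ has no nonzero zero in $\MM_{\Cp}$; combining this with the fact that $\wideg(A \circ B) = \wideg(A)$ whenever $B$ is invertible, in the case $n(\alpha) = n$ — writing $\alpha - 1 = \pi^n\beta$ with $\beta \in \OO_K^\times$ — the identity $F_{\alpha - 1} = F_\pi^{\circ n} \circ F_\beta$ shows that $\wideg(F_{\alpha - 1}) = q^n$, that the zero set of $F_{\alpha - 1}$ in $\MM_{\Cp}$ is exactly $\Lambda_n^+$, consisting of $q^n$ simple zeros, and that its Weierstrass polynomial is $P(T) = \prod_{u \in \Lambda_n^+}(T - u) \in \OO_K[T]$.

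Now the induction. For (1)$\Rightarrow$ at level $n$: let $u \in \Lambda_n$ and $n(\alpha) = m < n$ (when $n = 1$ this means $\alpha - 1 \in \OO_K^\times$, so $\wideg(F_\alpha - T) = 1$ and $F_\alpha(u) \neq u$ for $u \neq 0$, which settles the base case); by (2) at level $m$, $\wideg(F_\alpha - T) = q^m$, so in $F_\alpha(T) - T = (\alpha - 1)T + \sum_{i \geq 2} c_i T^i$ one has $c_{q^m} \in \OO_K^\times$ and $\val(c_i) \geq 1$ for $2 \leq i < q^m$, while $\val(\alpha - 1) = m$; evaluating at $u$ and comparing valuations of monomials (using $\val u = 1/(q^{n-1}(q-1))$ and $m \leq n-1$), the term $c_{q^m} u^{q^m}$, of valuation $q^m/(q^{n-1}(q-1))$, is strictly smaller in valuation than every other term, so $\val(F_\alpha(u) - u) = q^m/(q^{n-1}(q-1)) < \infty$ and $F_\alpha(u) \neq u$. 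For (3): by (1)$\Rightarrow$, the stabiliser in $\OO_K^\times$ of any $u \in \Lambda_n$ lies in $1 + \pi^n\OO_K$, so its orbit has at least $[\OO_K^\times : 1 + \pi^n\OO_K] = q^{n-1}(q-1) = \#\Lambda_n$ elements, i.e.\ equals $\Lambda_n$. For (1)$\Leftarrow$: $1 + \pi^n\OO_K$ is pro-$p$ and $\#\Lambda_n = q^{n-1}(q-1)$ is prime to $p$, so the finite $p$-group $\rho_n(1 + \pi^n\OO_K)$ has a fixed point $u_0 \in \Lambda_n$; writing an arbitrary $u \in \Lambda_n$ as $F_\gamma(u_0)$ via (3), one gets $F_\alpha(u) = F_{\alpha\gamma}(u_0) = F_{\gamma\alpha}(u_0) = F_\gamma(u_0) = u$ for all $\alpha \in 1 + \pi^n\OO_K$. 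Finally (2) at level $n$: when $n(\alpha) = n$, (1)$\Leftarrow$ at levels $\leq n$ together with $F_\alpha(0) = 0$ gives $F_\alpha(u) = u$ for every $u \in \Lambda_n^+$; dividing $F_\alpha(T) - T$ by $P(T)$ leaves a remainder in $\OO_K[T]$ of degree $< q^n$ that vanishes at the $q^n$ points of $\Lambda_n^+$, hence is $0$, so $F_\alpha(T) - T = Q(T)P(T)$ with $Q \in \OO_K\dcroc{T}$; comparing coefficients of $T$ and using $\sum_{u \in \Lambda_n^+,\, u \neq 0}\val(u) = n = \val(\alpha - 1)$ shows $Q(0) \in \OO_K^\times$, so $\wideg(F_\alpha - T) = \deg P = q^n$. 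The main obstacle is the Newton-polygon bookkeeping feeding (1)$\Rightarrow$: one must pin down $\val(u) = 1/(q^{n-1}(q-1))$ for $u \in \Lambda_n$ and, above all, verify that the monomial of index $q^{n(\alpha)}$ in $F_\alpha(T) - T$ genuinely dominates after evaluation, i.e.\ that no accidental cancellation occurs — and it is here, through $\wideg(F_\pi) = q$ and the ensuing $\wideg(F_\alpha - T) = q^{n(\alpha)}$, that fullness of $\calF$ enters.
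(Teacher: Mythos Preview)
Your argument has one genuine error: in the step (1)$\Leftarrow$ you assert that $\#\Lambda_n = q^{n-1}(q-1)$ is prime to $p$, but since $q=p^f$ this is false for every $n\geq 2$. So the $p$-group fixed-point trick does not apply, and your proof breaks at the first inductive step beyond $n=1$.

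The fix is immediate and is in fact implicit in what you have already done. Once you know (1)$\Rightarrow$ and (3) at level $n$, orbit--stabiliser for the continuous action of $\OO_K^\times$ on the finite set $\Lambda_n$ gives
\[
[\OO_K^\times:\operatorname{Stab}(u)] \;=\; \#\Lambda_n \;=\; q^{n-1}(q-1) \;=\; [\OO_K^\times : 1+\pi^n\OO_K],
\]
and since (1)$\Rightarrow$ says $\operatorname{Stab}(u)\subset 1+\pi^n\OO_K$, equality follows. (Because $1+\pi^n\OO_K$ is normal, this stabiliser is then the same for every $u\in\Lambda_n$, recovering exactly the conclusion you wanted.) No fixed-point argument is needed.

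With this repair your proof is correct, and its shape differs from the paper's in a few places worth noting. You make the monoid law $F_\alpha\circ F_\beta = F_{\alpha\beta}$ explicit via the logarithm, which clarifies why the orbit--stabiliser count is legitimate; the paper uses this implicitly. For (1)$\Rightarrow$ you evaluate $F_\alpha(T)-T$ at $u$ and isolate the dominant monomial $c_{q^m}u^{q^m}$, whereas the paper instead counts that the $q^{n(\alpha)}$ roots of $F_\alpha(T)-T$ are already exhausted by $\Lambda_0\cup\cdots\cup\Lambda_{n(\alpha)}$. For (2) you divide by the Weierstrass polynomial $P(T)$ and read off $Q(0)\in\OO_K^\times$ from $\sum_{u\neq 0}\val(u)=n$; the paper reaches the same endpoint by tracing the Newton polygon segment by segment from $(1,n)$ down to $(q^n,0)$. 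The two arguments are equivalent in content, but yours packages the Newton-polygon bookkeeping into the single identity $\sum \val(u)=n$, while the paper's version makes the individual slopes visible. Incidentally, the computation of $\wideg(F_{\alpha-1})$ in your preamble is never used afterwards; the only fact you need from that paragraph is that $P(T)\in\OO_K[T]$, which already follows from $P$ being the Weierstrass polynomial of $F_\pi^{\circ n}$.
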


\begin{proof}
If $n=1$ and $F_\alpha(u) = u$, then $u$ is a root of $F_\alpha(T)-T = (\alpha-1)T + \bigO(T^2)$, so that $\alpha - 1 \in \pi \OO_K$. This implies that $\{ F_\alpha(u) \}_{\alpha \in \OO_K^\times}$ has at least $q-1$ distinct elements. These elements are all roots of $F_\pi(T)/T$, whose $\wideg$ is $q-1$, so $\{ F_\alpha(u) \}_{\alpha \in \OO_K^\times}$ has precisely $q-1$ elements. These elements all have valuation $1/(q-1)$, and if $n(\alpha)=1$, the Newton polygon of $F_\alpha(T)-T$ starts at the point $(1,1)$, so that it can have only one segment, and $\wideg(F_\alpha(T)-T) = q$. This implies the proposition for $n=1$.

Assume now that the proposition holds up to some $n \geq 1$ and take $u \in \Lambda_{n+1}$. If $n(\alpha) \leq n$, then $F_\alpha(T)-T$ has at most $q^n$ roots by (2), contained in $\Lambda_0 \cup \hdots \cup \Lambda_n$ by (1). Therefore $F_\alpha(u) = u$ implies $n(\alpha) \geq n+1$. The set $\{ F_\alpha(u) \}_{\alpha \in \OO_K^\times}$ therefore has at least $q^n(q-1)$ distinct elements, all of them roots of $F_\pi^{\circ n+1}(T)/F_\pi^{\circ n}(T)$. 

This implies that $\{ F_\alpha(u) \}_{\alpha \in \OO_K^\times}$ has exactly $q^n(q-1)$ elements. If $n(\alpha)=n+1$, the Newton polygon of $F_\alpha(T)-T$ starts at the point $(1,n+1)$, with $n+1$ segments of height one and slopes $-1/q^k(q-1)$ with $0 \leq k \leq n$, so that it reaches the point $(q^{n+1},0)$ and hence $\wideg(F_\alpha(T)-T) = q^{n+1}$. This implies the proposition for $n+1$.
\end{proof}

\begin{coro}
\label{extab}
The field $K_\infty$ is an abelian totally ramified extension of $K$, and if $g \in \Gal(K_\infty/K)$, there is a unique $\eta(g) \in \OO_K^\times$ such that $g(u) = F_{\eta(g)}(u)$ for all $u \in \Lambda_\infty$. 

The map $\eta : \Gal(K_\infty/K) \to \OO_K^\times$ is an isomorphism.
\end{coro}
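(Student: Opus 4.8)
The plan is to treat the layers $K_n/K$ one at a time, identify each $\Gal(K_n/K)$ with $\OO_K^\times/(1+\pi^n\OO_K)$, and then pass to the inverse limit. A preliminary remark cleans up the bookkeeping: since $\Log(T) = T + \bigO(T^2)$, it has a composition inverse in $T \cdot K\dcroc{T}$, and then from $\Log \circ F_\alpha(T) = \alpha \cdot \Log(T)$ (Proposition~\ref{lublog}) one gets $\Log \circ F_\alpha \circ F_\beta(T) = \alpha\beta \cdot \Log(T) = \Log \circ F_{\alpha\beta}(T)$, hence $F_\alpha \circ F_\beta = F_{\alpha\beta}$ in $\OO_K\dcroc{T}$ for all $\alpha, \beta \in \OO_K$. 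Thus $\alpha \mapsto F_\alpha$ defines an action of the group $\OO_K^\times$ on $\MM_{\Cp}$ preserving each $\Lambda_n$ (as already noted before Proposition~\ref{sizelam}), and Proposition~\ref{sizelam} says precisely that the action on $\Lambda_n$ is transitive with the stabilizer of every point equal to $1 + \pi^n\OO_K$; in other words $\Lambda_n$ is a torsor under $H_n := \OO_K^\times/(1+\pi^n\OO_K)$, of cardinality $q^{n-1}(q-1)$.

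Next I would extract the field theory. For any $u \in \Lambda_n$, every element of $\Lambda_n$ is $F_\alpha(u)$ with $F_\alpha \in \OO_K\dcroc{T}$ convergent at $u$, so $K_n = K(u)$; as $F_\pi(u) \in \Lambda_{n-1}$, this also gives $K_{n-1} \subseteq K_n$ and hence $K_\infty = K(\Lambda_\infty)$. Any $K$-embedding $K_n \hookrightarrow \overline{\Qp}$ is injective and commutes with the $K$-rational maps $F_\pi^{\circ j}$, so it permutes $\Lambda_n$ and therefore sends $u$ into $\Lambda_n \subseteq K_n$; thus $K_n/K$ is Galois. Since the $F_\alpha$ have coefficients in $K$, the Galois action on $\Lambda_n$ commutes with the $\OO_K^\times$-action, so each $g \in \Gal(K_n/K)$ acts on the $H_n$-torsor $\Lambda_n$ by translation by a unique $\eta_n(g) \in H_n$; the resulting map $\eta_n \colon \Gal(K_n/K) \to H_n$ is a group homomorphism, and it is injective because anything in its kernel fixes $\Lambda_n$, hence $K_n = K(\Lambda_n)$, pointwise.

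The essential point — and the place where ramification is genuinely needed — is the surjectivity of $\eta_n$, equivalently $[K_n:K] = q^{n-1}(q-1)$. On one hand, $u \in \Lambda_n$ is a root of the monic polynomial $P_n/P_{n-1} \in \OO_K[T]$ of degree $q^{n-1}(q-1)$, where $P_m \in \OO_K[T]$ is the Weierstrass polynomial of $F_\pi^{\circ m}$ (its roots are $\Lambda_0 \cup \dots \cup \Lambda_m$, all simple), so $[K_n:K] \leq q^{n-1}(q-1)$. On the other hand, a Newton polygon computation — the one implicit in the proof of Proposition~\ref{sizelam}, or carried out directly by iterating $F_\pi$ — shows $\val(u) = 1/(q^{n-1}(q-1))$ for every $u \in \Lambda_n$, so $e(K_n/K) \geq q^{n-1}(q-1)$. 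Combining, $e(K_n/K) = [K_n:K] = q^{n-1}(q-1) = |H_n|$: the extension $K_n/K$ is totally ramified of that degree and $\eta_n$ is an isomorphism (an injective homomorphism between finite groups of equal order); in particular $K_n/K$ is abelian.

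Finally I would pass to the limit. The inclusions $\Lambda_n \subseteq K_{n+1}$ make the $\eta_n$ compatible with the restriction maps $\Gal(K_{n+1}/K) \to \Gal(K_n/K)$ and the projections $H_{n+1} \to H_n$, so $\eta := \varprojlim_n \eta_n$ is an isomorphism $\Gal(K_\infty/K) = \varprojlim_n \Gal(K_n/K) \xrightarrow{\ \sim\ } \varprojlim_n H_n = \OO_K^\times$, the last identification because $\OO_K^\times$ is $\pi$-adically complete and separated. Unwinding, $\eta(g)$ is the unique element of $\OO_K^\times$ with $g(u) = F_{\eta(g)}(u)$ for all $u \in \Lambda_\infty$: existence from surjectivity at each finite level, uniqueness from $\bigcap_n(1+\pi^n\OO_K) = \{1\}$ together with the torsor property of the $\Lambda_n$. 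Since $\OO_K^\times$ is abelian and every $K_n/K$ is totally ramified, $K_\infty/K$ is abelian and totally ramified, which completes the proof.
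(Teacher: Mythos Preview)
Your argument is correct and follows the same skeleton as the paper's: use Proposition~\ref{sizelam} to control the $F_\alpha$-orbit of a point $u\in\Lambda_n$, deduce $K_n=K(u)$ is Galois over $K$ with group $\OO_K^\times/(1+\pi^n\OO_K)$, and pass to the limit. Two small differences in route are worth noting. First, you invoke the logarithm to obtain $F_\alpha\circ F_\beta=F_{\alpha\beta}$ and phrase everything via an $H_n$-torsor; the paper never uses this composition law, relying only on the \emph{bijection} $\OO_K^\times/(1+\pi^n\OO_K)\to\Lambda_n$, $\alpha\mapsto F_\alpha(u)$, which already follows from parts (1) and (3) of Proposition~\ref{sizelam} by counting. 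Second, for surjectivity of $\eta_n$ you run a cardinality argument (upper bound on $[K_n:K]$ from the Weierstrass polynomial, lower bound from $\val(u)=1/(q^{n-1}(q-1))$), whereas the paper argues more directly that each $F_\alpha$ already \emph{is} a field automorphism of $K(u)$ and that any $g\in\Gal(K_n/K)$ must send $u$ into $\Lambda_n=\{F_\alpha(u)\}$; but note that the paper's claim ``$u\mapsto F_\alpha(u)$ induces a field automorphism'' itself rests on $u$ and $F_\alpha(u)$ having the same minimal polynomial, which is exactly the irreducibility you extract from the valuation bound. So your version simply makes explicit a step the paper leaves to the reader.
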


\begin{proof}
Take $u \in \Lambda_n$ and $\alpha \in \OO_K^\times$. As we have seen above, $F_\alpha(u) \in \Lambda_n$, so that the map $u \mapsto F_\alpha(u)$ induces a field automorphism of $K(u)$. By (3) of proposition \ref{sizelam}, this implies that $K_n = K(u)$ and that every element of $\Gal(K_n/K)$ comes from $u \mapsto F_\alpha(u)$ for some $\alpha \in \OO_K^\times$. The extension $K_n/K$ is therefore abelian, and so is $K_\infty/K$. Since $K_n=K(u)$, the extension $K_n/K$ is totally ramified, and so is $K_\infty/K$.

The map $\eta$ is surjective because every $F_\alpha(T)$ gives rise to an automorphism of $K_\infty$, and it is injective because if $\eta(g)=1$, then $g(u)=u$ for all $u \in \Lambda_\infty$ so that $g=1$.
\end{proof}

In order to prove our main theorem, we study the $p$-adic periods of $\eta$. Corollary \ref{extab} and local class field theory imply that the extension $K_\infty/K$ is attached to a uniformizer $\varpi$ of $\OO_K$. Let $\chi_\varpi : G_K \to \OO_K^\times$ denote the corresponding Lubin-Tate character.

\section{$p$-adic Hodge theory}
\label{charid}

Let $R$ be the $p$-adic completion of $\varinjlim_{n  \geq 1} \OO_K \dcroc{X_n}$ where $\OO_K \dcroc{X_n}$ is seen as a subring of $\OO_K \dcroc{X_{n+1}}$ via the identification $X_n=F_\pi(X_{n+1})$ (this ring is defined in \cite{JS15}, where it is denoted by $A_\infty$). We define an action of $G_K$ on $R$ by $g(H(X_n)) = H ( F_{\eta(g)}(X_n))$. This is well-defined since $F_\pi \circ F_{\eta(g)} (T) = F_{\eta(g)} \circ F_\pi (T)$. We have $R/\pi R = \varinjlim_{n  \geq 1} k \dcroc{X_n}$. 

\begin{lemm}
\label{rperf}
The ring $R/\pi R$ is perfect.
\end{lemm}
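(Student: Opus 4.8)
The goal is to show that $R/\pi R = \varinjlim_{n\geq 1} k\dcroc{X_n}$ is perfect, where the transition maps send $X_n$ to $\overline{F_\pi}(X_{n+1})$ and $\overline{F_\pi}$ is the reduction of $F_\pi$ mod $\pi$. By Proposition \ref{serg} (together with the fullness hypothesis, which forces $p^d = q$), we know $\overline{F_\pi}(T) = G(T^q)$ for some $G(T) \in T\cdot k\dcroc{T}$ with $G'(0) \in k^\times$; in particular $G$ is invertible for composition over $k$. Since $k$ is perfect of characteristic $p$, the $q$-power Frobenius $\varphi$ is an automorphism of $k$, and I will use this to produce $p$-th roots in the direct limit.

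The key step is the following. Write $A_n = k\dcroc{X_n}$ and recall that the image of $X_n$ in $A_\infty := R/\pi R$ equals the image of $\overline{F_\pi}(X_{n+1}) = G(X_{n+1}^q)$. First I would check that $A_\infty$ is reduced (indeed a domain), which is clear since each $A_n$ is a domain and the transition maps are injective (as $\overline{F_\pi}$ has positive Weierstrass degree, substitution is injective). It then suffices to show that Frobenius is surjective on $A_\infty$. Given an element represented by $h(X_n) \in A_n$ for some $n$, I want to exhibit a $p$-th root. Applying $\varphi^{-1}$ to the coefficients of $h$ gives $h^{(-1)}(X_n) \in A_n$ with $(h^{(-1)})^{(p)} = h$ where the superscript $(p)$ means applying $\varphi$ to coefficients; and the element of $A_\infty$ represented by $h^{(-1)}(X_n^{1/p}\text{-ish})$ — more precisely, I need $X_n$ itself to have a $p$-th root in $A_\infty$. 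This is where the tower comes in: in $A_\infty$ we have $X_n = G(X_{n+1}^q) = G(X_{n+1}^{q/p \cdot p})$, and since $G$ is invertible for composition and $q = p^d$ with $d \geq 1$, we get $X_{n+1}^q = G^{\circ(-1)}(X_n)$, so $X_n = (\text{something})^p \cdot(\text{unit with a }p\text{-th root})$. Concretely, since $q/p$ is a power of $p$, $X_{n+1}^q$ is a $p$-th power in $A_{n+1}$, hence $X_n = G(X_{n+1}^q)$ lies in the image of the map "raise to the $p$-th power then apply $\varphi$" composed appropriately; iterating/combining with the Frobenius on coefficients of $k$ shows every $X_n$, and hence every element of $A_n$, has a $p$-th root in $A_{n+1} \subset A_\infty$.

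The cleanest way to package this: I claim $A_\infty^p \supseteq A_{n}$ for every $n$ via the inclusion $A_n \hookrightarrow A_{n+1}$, $X_n = G(X_{n+1}^q)$. Since $G(T^q) = G((T^{q/p})^p)$ and $k$ is perfect, $\varphi$ acts bijectively on coefficients, so $G(T^q) = \varphi\bigl(\psi(T)\bigr)^p$ where $\psi(T) = \varphi^{-1}(G)^{\circ(1/\deg?)}$... — rather than chase this, I would simply observe: the $p$-power map $A_{n+1} \to A_{n+1}$ has image containing $k^p\dcroc{X_{n+1}^p} = k\dcroc{X_{n+1}^p}$ (by perfectness of $k$), and since $q = p^d$ with $d\geq 1$ we have $G(X_{n+1}^q) \in k\dcroc{X_{n+1}^p}$, hence $X_n$ (viewed in $A_{n+1}$, thus in $A_\infty$) is a $p$-th power. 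The same argument with $h(X_n)$ in place of $X_n$: $h(X_n) = h(G(X_{n+1}^q)) \in k\dcroc{X_{n+1}^p}$, which is contained in $A_{n+1}^p$. Therefore every element of $A_\infty$ is a $p$-th power in $A_\infty$, and combined with reducedness this gives that $A_\infty$ is perfect.

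The main obstacle is not any single hard estimate but rather getting the bookkeeping right: one must use both that $q$ is a positive power of $p$ (so the transition maps land in the subring of $p$-th powers of the coefficient-perfect ring $k\dcroc{X_{n+1}}$) and that $k$ is perfect (so that $k\dcroc{X_{n+1}^p} = (k\dcroc{X_{n+1}})^p$). Neither alone suffices. Once both are in hand the proof is a two-line verification that $A_n \subseteq A_{n+1}^p$ for all $n$.
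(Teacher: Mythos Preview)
Your argument is correct and rests on the same observation as the paper's: modulo $\pi$ the transition map sends $X_n$ to $G(X_{n+1}^q)$, so it factors through $q$-th powers. The paper packages this by the change of variable $Y_n = G^{\circ n}(X_n)$, under which (using $|k|=q$, so $G(T^q)=G(T)^q$) the transition becomes $Y_n = Y_{n+1}^q$ and $R/\pi R$ is identified with the perfection $\varinjlim_{Y\mapsto Y^q} k\dcroc{Y}$; your direct check that $A_n \subset k\dcroc{X_{n+1}^q}\subset k\dcroc{X_{n+1}^p}=A_{n+1}^p$ reaches the same conclusion without the change of variable.
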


\begin{proof}
Let $G(T)$ be as in lemma \ref{serg}. The fact that $X_n=F_\pi(X_{n+1})$ implies that $G^{\circ n}(X_n) = G^{\circ n+1}(X_{n+1})^q$. Since $G'(0) \in k^\times$, we have $k \dcroc{T} = k \dcroc{G(T)}$ and therefore 
\[ R/\pi R = \varinjlim_{G^{\circ n}(X_n) = G^{\circ n+1}(X_{n+1})^q} k \dcroc{G^{\circ n}(X_n)} \]
is perfect.
\end{proof}

Let $\etplus = \varprojlim_{x \mapsto x^q} \OO_{\Cp} / \pi$. Choose a sequence $\{u_n\}_{n \geq 1}$ with $u_n \in \Lambda_n$ and $F_\pi(u_{n+1}) = u_n$. This sequence gives rise to a map $i : R / \pi R \to \etplus$, determined by the requirement that $i(X_n) = (G^{\circ 1}(\ubar_n),G^{\circ 2}(\ubar_{n+1}),\hdots)$. The definition of the action of $G_K$ on $R$ and corollary \ref{extab} imply that $i$ is $G_K$-equivariant.

\begin{lemm}
\label{inji}
The map $i : R / \pi R \to \etplus$ is injective.
\end{lemm}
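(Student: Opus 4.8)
The plan is to reduce the injectivity of $i$ to a statement about the images of the $X_n$ being algebraically independent enough, and then to detect this via valuations. Concretely, since $R/\pi R = \varinjlim_n k\dcroc{X_n}$ is a rising union of the rings $k\dcroc{X_n}$, and each of these is a complete DVR (in fact a power series ring in one variable over $k$), it suffices to show that the restriction of $i$ to each $k\dcroc{X_n}$ is injective. The map $i|_{k\dcroc{X_n}}$ sends $X_n$ to the element $\ubar := (G^{\circ 1}(\ubar_n), G^{\circ 2}(\ubar_{n+1}), \ldots) \in \etplus$; since $G'(0) \in k^\times$ and $G^{\circ m}(\ubar_{n+m-1})$ reduces the valuation of $\ubar_{n+m-1}$, one checks that $\ubar \neq 0$, i.e. the first component $G^{\circ 1}(\ubar_n)$ is a nonzero element of $\OO_{\Cp}/\pi$. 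A ring homomorphism out of $k\dcroc{X_n}$ is injective as soon as it does not kill $X_n$, because the only nonzero proper quotients of $k\dcroc{X_n}$ are the $k\dcroc{X_n}/X_n^m$, which receive $X_n$ nilpotently; so it remains to rule out that $i(X_n)$ is nilpotent in $\etplus$.

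The ring $\etplus = \varprojlim_{x \mapsto x^q} \OO_{\Cp}/\pi$ is a perfect ring of characteristic $p$ and is in fact a domain (it is the ring of integers of the tilt $\widetilde{\mathbf{E}}$, which is a field), so it has no nonzero nilpotents at all; hence $i(X_n) \neq 0$ forces $i|_{k\dcroc{X_n}}$ to be injective. Therefore the key point is genuinely just the nonvanishing $i(X_n) \neq 0$, which amounts to: for each $m \geq 1$, the element $G^{\circ m}(\ubar_{n+m-1})$ is nonzero in $\OO_{\Cp}/\pi$, equivalently $\val_p\bigl(G^{\circ m}(u_{n+m-1})\bigr) < 1$ where $\val_p$ is normalized so that $\val_p(\pi)$ equals $1/e$ with $e$ the absolute ramification index — one must pick a normalization and track valuations. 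Using proposition \ref{serg}, $F_\pi(T) \equiv G(T^q) \bmod \pi$, so $G^{\circ m}(u_{n+m-1})^q \equiv G^{\circ m+1}(u_{n+m}) \bmod \pi$ compatibly, and since $u_j \in \Lambda_j$ has valuation $1/(q^{j-1}(q-1))$ by the computation in the proof of proposition \ref{sizelam}, the valuations of the components $G^{\circ m}(u_{n+m-1})$ form a sequence decreasing by a factor $q$ at each step and bounded away from the valuation of $\pi$; in particular each is $<\val_p(\pi)$, so each component is a nonzero element of $\OO_{\Cp}/\pi$.

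The main obstacle I anticipate is bookkeeping rather than conceptual: one has to be careful about the two a priori different descriptions of $R/\pi R$ (the one as $\varinjlim k\dcroc{X_n}$ via $X_n = F_\pi(X_{n+1})$, and the perfect description $\varinjlim k\dcroc{G^{\circ n}(X_n)}$ from lemma \ref{rperf}) and to match the transition maps $x \mapsto x^q$ on the $\etplus$ side with the substitution $X_n \mapsto F_\pi(X_{n+1})$ on the $R/\pi R$ side; getting the indices and the power of $G$ to line up in $i(X_n) = (G^{\circ 1}(\ubar_n), G^{\circ 2}(\ubar_{n+1}),\ldots)$ is where an error could creep in. Once the components are identified correctly, injectivity on each $k\dcroc{X_n}$ is immediate from $\etplus$ being a domain together with $i(X_n)\neq 0$, and injectivity of $i$ follows by passing to the colimit, since a filtered colimit of injections is an injection.
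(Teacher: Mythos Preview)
Your argument is correct and follows the same line as the paper's: reduce to injectivity on each $k\dcroc{X_n}$, then use that $\etplus$ is an integral domain to reduce to $i(X_n)\neq 0$. The paper compresses this last step into the observation that otherwise $i(X_n)$ would satisfy a nonzero polynomial over $k$ and hence lie in $\Fpbar$, ``which is clearly not the case''; your valuation computation is just an explicit version of that ``clearly''. One small slip in your bookkeeping: when $q=2$ and $n=1$ the first component $G(\ubar_1)$ actually vanishes in $\OO_{\Cp}/\pi$, since $\val(u_1)=\val(\pi)/(q-1)=\val(\pi)$, so ``each component is nonzero'' is a hair too strong --- but any single later component being nonzero already suffices, so the conclusion is unaffected.
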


\begin{proof}
It is enough to show that $i : k \dcroc{X_n} \to \etplus$ is injective. If it was not, there would be a nonzero polynomial $P(T) \in k[T]$ such that $P(i(X_n)) = 0$, and $i(X_n) = (G^{\circ 1}(\ubar_n),G^{\circ 2}(\ubar_{n+1}),\hdots)$ 
would then belong to $\Fpbar$, which is clearly not the case.
\end{proof}

Let $K_0 = \Qp^{\unr} \cap K$ and let $\atplus = \OO_K \otimes_{\OO_{K_0}} W(\etplus)$ (see \cite{FPP}; note that $\atplus$ usually denotes $W(\etplus)$, and is denoted by $A_{\mathrm{inf}}$ in ibid.). We have $R = \OO_K \otimes_{\OO_{K_0}} W(R/\pi R)$ since $R$ is a strict $\pi$-ring, and by the functoriality of Witt vectors, the map $i$ extends to an injective and $G_K$-equivariant map $i : R \to \atplus$. We write $x$ instead of $i(X_1) \in \atplus$. The $G_K$-equivariance of $i$ implies that $g(x) = F_{\eta(g)}(x)$.

Let $\bcris^+$ and $\bdr$ be some of Fontaine's rings of periods. Recall that $\bdr$ is a field, that there is a Frobenius map $\phi$ on $\bcris^+$, a filtration $\{\Fil^i \bdr\}_{i \in \ZZ}$ on $\bdr$, and an injective map $K \otimes_{K_0} \bcris^+ \to \bdr$. There is also an action of $G_K$ on $\bcris^+$ and $\bdr$ compatible with the above structure, and $\bdr^{G_K} = K$. Let $\phiq=\phi^f$ on $\bcris^+$, where $q=p^f$, extended by $K$-linearity to $K \otimes_{K_0} \bcris^+$. We refer to \cite{FPP} and \cite{FST} for the properties of these objects. Let $\Emb = \Gal(K/\Qp)$. If $\tau \in \Emb$, choose a $n(\tau) \in \ZZ_{\geq 0}$ such that $\tau {\mid_{K_0}} = \phi^{n(\tau)}$. The map $\tau \otimes \phi^{n(\tau)} : K \otimes_{K_0} \bcris^+ \to K \otimes_{K_0} \bcris^+$ is then well-defined and commutes with $\phiq$ and the action of $G_K$. 

We say that a character $\lambda : G_K \to \OO_K^\times$ is \emph{crystalline positive} if there exists a nonzero $z \in K \otimes_{K_0} \bcris^+$ such that $g(z) = \lambda(g) \cdot z$ for all $g \in G_K$. The following proposition summarizes the input that we need from the $p$-adic Hodge theory of characters.

\begin{prop}
\label{cryschar}
A character $\lambda : G_K \to \OO_K^\times$ that factors through $\Gal(K_\infty/K)$ is crystalline positive if and only if $\lambda= \prod_{\tau \in \Emb} \tau \circ \chi_\varpi^{h_\tau}$ with $h_\tau \in \ZZ_{\geq 0}$.

If $t_\varpi \in K \otimes_{K_0} \bcris^+$ is such that $g(t_\varpi) = \chi_\varpi(g) \cdot t_\varpi$ for all $g \in G_K$, then $t_\varpi \in \Fil^1 \bdr$ and $\phiq(t_\varpi) = \varpi \cdot t_\varpi$.
\end{prop}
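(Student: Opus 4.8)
The plan is to prove both directions of the equivalence using the structure of Lubin--Tate theory together with the classification of crystalline characters of $G_K$ via their Hodge--Tate weights, and then to deduce the final assertion about $t_\varpi$ from the known properties of Lubin--Tate periods. For the ``if'' direction, recall that for a uniformizer $\varpi$ of $\OO_K$, the Lubin--Tate character $\chi_\varpi$ is crystalline: if $G_\varpi$ denotes the associated Lubin--Tate formal group with logarithm $\log_{G_\varpi}$, one can form the period $t_\varpi \in K \otimes_{K_0} \bcris^+$ (for instance as $\log_{G_\varpi}$ evaluated at a compatible system of torsion points, viewed inside $\bcris^+$), and it satisfies $g(t_\varpi) = \chi_\varpi(g) \cdot t_\varpi$. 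Applying the operators $\tau \otimes \phi^{n(\tau)}$ for $\tau \in \Emb$, which commute with the $G_K$-action, produces periods for each conjugate character $\tau \circ \chi_\varpi$, and multiplying together $h_\tau$ copies of each such period (using that the $h_\tau$ are nonnegative, so we never need to invert anything, keeping us inside $\bcris^+$ rather than $\bcris$) yields a nonzero $z \in K \otimes_{K_0} \bcris^+$ with $g(z) = \lambda(g) \cdot z$. Hence $\lambda$ is crystalline positive.

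For the ``only if'' direction, suppose $\lambda : G_K \to \OO_K^\times$ factors through $\Gal(K_\infty/K)$ and is crystalline positive, witnessed by some nonzero $z \in K \otimes_{K_0} \bcris^+$. First, $\lambda$ is then a crystalline character of $G_K$ in the usual sense (the line $K \otimes_{K_0} \bcris^+ \cdot z$ inside the crystalline period ring gives an admissible filtered module), so $\lambda$ is determined by its Hodge--Tate weights $\{h_\tau\}_{\tau \in \Emb}$, one for each embedding $\tau$ of $K$; and the existence of a period in $\bcris^+$ (as opposed to merely $\bcris$) forces all $h_\tau \geq 0$. On the other hand, by Corollary \ref{extab} and local class field theory $\Gal(K_\infty/K)$ is identified with $\OO_K^\times$ via $\eta$, and $\chi_\varpi$ is exactly the character cutting out this extension, so any character of $G_K$ factoring through $\Gal(K_\infty/K)$ and valued in $\OO_K^\times$ is, up to a finite-order twist, a product of conjugates $\tau \circ \chi_\varpi^{h_\tau}$. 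Since both $\lambda$ and $\prod_\tau \tau \circ \chi_\varpi^{h_\tau}$ are crystalline with the same Hodge--Tate weights and the same restriction pattern, their ratio is a crystalline character with all Hodge--Tate weights zero, hence unramified; but it also factors through the totally ramified extension $K_\infty/K$, so it is trivial, giving $\lambda = \prod_{\tau} \tau \circ \chi_\varpi^{h_\tau}$ exactly.

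For the last sentence of the proposition: given any $t_\varpi \in K \otimes_{K_0} \bcris^+$ with $g(t_\varpi) = \chi_\varpi(g) \cdot t_\varpi$, the character $\chi_\varpi$ has Hodge--Tate weight $1$ at the identity embedding and $0$ at all others (this is the defining property of a Lubin--Tate character associated to a uniformizer), so $t_\varpi$ lands in $\Fil^1 \bdr \setminus \Fil^2 \bdr$ under $K \otimes_{K_0} \bcris^+ \hookrightarrow \bdr$; in particular $t_\varpi \in \Fil^1 \bdr$. The Frobenius relation $\phiq(t_\varpi) = \varpi \cdot t_\varpi$ is the standard computation of the Frobenius eigenvalue on the one-dimensional crystalline module attached to $\chi_\varpi$: the crystalline Frobenius $\phiq$ acts on the relevant $D_{\mathrm{cris}}$-line by $\varpi$, which one sees either from the explicit Lubin--Tate period or from weak admissibility (the Newton and Hodge slopes must match).

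The main obstacle I expect is the ``only if'' direction, specifically reducing a general crystalline positive character to a product of powers of conjugates of $\chi_\varpi$: this requires knowing the full classification of crystalline characters of $G_K$ by their Hodge--Tate weights (which is standard but uses $p$-adic Hodge theory of abelian characters, e.g. as in \cite{FST}), and then matching up the combinatorial data of the $h_\tau$ with the factorization through $\Gal(K_\infty/K) \cong \OO_K^\times$ via $\eta$ and local class field theory. The ``if'' direction and the computation of $\Fil$ and $\phiq$ on $t_\varpi$ are essentially bookkeeping with Lubin--Tate periods.
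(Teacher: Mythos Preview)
Your argument for the classification of crystalline positive characters is essentially the same as the paper's: both reduce to showing that $\lambda \cdot \prod_{\tau} \tau \circ \chi_\varpi^{-h_\tau}$ is crystalline of Hodge--Tate weight zero at every embedding, hence unramified, hence trivial because $K_\infty/K$ is totally ramified. You spell out the ``if'' direction more explicitly than the paper does, but the content is the same (the paper gets it implicitly from the construction of the explicit period $t_E$).

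Where you diverge is in the last assertion. The paper does not argue abstractly via Hodge--Tate weights or weak admissibility; instead it fixes Colmez's explicit period $t_E \in K \otimes_{K_0} \bcris^+$, quotes that $t_E \in \Fil^1 \bdr$ and $\phiq(t_E) = \varpi \cdot t_E$, and then observes that any other $t_\varpi$ with the same Galois transformation law satisfies $t_\varpi / t_E \in \bdr^{G_K} = K$, so the two properties transfer. Your alternative of invoking ``the explicit Lubin--Tate period'' amounts to exactly this, so that route is fine. However, your second alternative --- deducing $\phiq(t_\varpi) = \varpi \cdot t_\varpi$ from weak admissibility --- does not work as stated: matching Newton and Hodge slopes only tells you that the $\phiq$-eigenvalue has valuation $1$, not that it equals $\varpi$ on the nose. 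To pin down the eigenvalue exactly you really do need the explicit period (or an equivalent input, such as the Lubin--Tate reciprocity law), together with the one-dimensionality argument $t_\varpi/t_E \in K$ to pass from the explicit period to an arbitrary one. So drop the weak-admissibility clause and make the $\bdr^{G_K}=K$ step explicit.
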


\begin{proof}[Sketch of proof]
If $\lambda : G_K \to \OO_K^\times$ is a crystalline positive character and $h_\tau \in \ZZ_{\geq 0}$ denotes the \emph{Hodge-Tate weight} of $\lambda$ at $\tau \in \Emb$, then $\lambda \cdot \prod_{\tau \in \Emb} \tau \circ \chi_\varpi^{-h_\tau}$ is crystalline and has Hodge-Tate weight zero at all $\tau \in \Emb$ so that it is unramified, and therefore trivial if $\lambda$ factors through $\Gal(K_\infty/K)$, since $K_\infty/K$ is totally ramified.

Let $\omega_E$ and $t_E$ be the elements constructed in \S 9.2 and \S 9.3 of \cite{CEB} (with $E=K$ and $\pi_E=\varpi$). We have $t_E \in K \otimes_{K_0} \bcris^+$ and $\phiq(t_E) = \varpi \cdot t_E$ and $t_E \in \Fil^1 \bdr$ (proposition 9.10 of ibid). If $g \in G_K$, then (in the notation of ibid and where $[\,\cdot\,]_{\LT}$ denotes the endomorphisms of the Lubin-Tate group attached to $\varpi$) we have $g(\omega_E) = [\chi_\varpi(g)]_{\LT} (\omega_E)$ and therefore $g(t_E) = g (F_E(\omega_E)) = F_E (g(\omega_E)) = F_E \circ [\chi_\varpi(g)]_{\LT} (\omega_E) = \chi_\varpi(g) \cdot F_E(\omega_E) = \chi_\varpi(g) \cdot t_E$ since $F_E$ is the logarithm of the Lubin-Tate group attached to $\varpi$. If $t_\varpi \in K \otimes_{K_0} \bcris^+$ is such that $g(t_\varpi) = \chi_\varpi(g) \cdot t_\varpi$ for all $g \in G_K$, then $t_\varpi / t_E \in \bdr^{G_K} = K$, and this implies the rest of the proposition.
\end{proof}

Recall that $\Log(T) \in K \dcroc{T}$ is the logarithm attached to $\calF$. Since $\Log(T)$ converges on the open unit disk, we can view $\Log(x)$ as an element of $K \otimes_{K_0} \bcris^+$. 

\begin{prop}
\label{etacris}
The character $\eta : G_K \to \OO_K^\times$ is crystalline positive.
\end{prop}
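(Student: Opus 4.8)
The plan is to write down an explicit period. I take $z := \Log(x) \in K\otimes_{K_0}\bcris^+$, where $\Log$ is the logarithm of Proposition \ref{lublog}; this is a well-defined element, as just observed, since $\Log$ converges on the open unit disk and $x=i(X_1)$ is topologically nilpotent. First I would check that $z$ transforms by $\eta$. We already know that $g(x)=F_{\eta(g)}(x)$ for $g\in G_K$, and $\Log\circ F_\alpha(T)=\alpha\cdot\Log(T)$ for all $\alpha\in\OO_K$ by part (3) of Proposition \ref{lublog}; since $g$ fixes the coefficients of $\Log$ (which lie in $K$) and is continuous, $g(z)=\Log(g(x))=\Log(F_{\eta(g)}(x))=\eta(g)\cdot z$. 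So $\eta$ is crystalline positive, by the definition recalled above, provided $z\neq 0$, and this nonvanishing is the only real point.

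To prove $z\neq 0$ I would argue by contradiction; suppose $z=0$. Iterating part (3) of Proposition \ref{lublog} with $\alpha=\pi$ gives $\Log(F_\pi^{\circ n}(x))=\pi^{n}z=0$ for all $n\geq 0$. On the other hand, consider the reduction modulo $\pi$. By Proposition \ref{serg} and the equality $p^{d}=q$ that follows from fullness, the reduction of $F_\pi$ equals $G(T^{q})$ with $G'(0)\neq 0$; hence the reduction of $F_\pi^{\circ n}(x)$ equals $G^{\circ n}(\bar x^{q^{n}})$, where $\bar x\in\etplus$ is the image of $X_1$, nonzero and transcendental over $k$ by Lemma \ref{inji}. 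Thus this reduction is nonzero, with valuation $q^{n}\cdot v_{E}(\bar x)$, which tends to $+\infty$. Consequently $F_\pi^{\circ n}(x)\neq 0$ for every $n$, and at the same time $F_\pi^{\circ n}(x)$ becomes arbitrarily divisible in $\bcris^+$. Writing $\Log(T)=T+\sum_{m\geq 2}a_{m}T^{m}$, for $n$ large enough the element $\sum_{m\geq 2}a_{m}F_\pi^{\circ n}(x)^{m-1}$ is topologically nilpotent in $\bcris^+$, so $u_{n}:=1+\sum_{m\geq 2}a_{m}F_\pi^{\circ n}(x)^{m-1}$ is a unit; then $\Log(F_\pi^{\circ n}(x))=F_\pi^{\circ n}(x)\cdot u_{n}$, and $\Log(F_\pi^{\circ n}(x))=0$ forces $F_\pi^{\circ n}(x)=0$ --- a contradiction. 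Hence $z\neq 0$ and $\eta$ is crystalline positive.

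The step I expect to be the main obstacle is the quantitative input hidden in the previous paragraph: one must control how divisible $F_\pi^{\circ n}(x)$ is not merely modulo $\pi$ (where its valuation $q^{n}v_{E}(\bar x)$ does the job) but inside $\bcris^+$ itself, once the divided powers of a generator of $\ker\theta$ are adjoined, and weigh this against the growth of the denominators of the coefficients $a_{m}$ of $\Log$, so as to be certain the tail $\sum_{m\geq 2}a_{m}F_\pi^{\circ n}(x)^{m-1}$ is genuinely topologically nilpotent for $n\gg 0$. Equivalently, one wants the compositional inverse $E(T)=T+\bigO(T^{2})\in K\dcroc{T}$ of $\Log$ --- which converges on a disk by an induction on its coefficients starting from the relation $E(\pi T)=F_\pi(E(T))$, cf. \cite{L94} --- to be legitimately evaluable at $\Log(F_\pi^{\circ n}(x))=0$, yielding $F_\pi^{\circ n}(x)=E(0)=0$. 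A route that sidesteps this bookkeeping is to apply $\theta$: one has $\theta(F_\pi^{\circ n}(x))=F_\pi^{\circ n}(\theta(x))$ and, by continuity, $\theta(\Log(w))=\Log(\theta(w))$; so if $\theta(x)\notin\Lambda_\infty$ then $F_\pi^{\circ n}(\theta(x))$ is a nonzero element of $\MM_{\Cp}$ with valuation tending to $+\infty$, on which $\Log$ is injective, whence $\Log(F_\pi^{\circ n}(x))\neq 0$; and if $\theta(x)\in\Lambda_\infty$ then $F_\pi^{\circ n}(x)\in\Fil^{1}\bdr$ for $n$ large, so that $\Log(F_\pi^{\circ n}(x))=F_\pi^{\circ n}(x)\cdot u$ with $u\in 1+\Fil^{1}\bdr$ a unit of $\bdr^{+}$, once more nonzero because $F_\pi^{\circ n}(x)\neq 0$. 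Either way $\Log(x)\neq 0$.
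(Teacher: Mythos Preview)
Your first paragraph is exactly the paper's proof: take $z=\Log(x)\in K\otimes_{K_0}\bcris^+$ and compute $g(z)=\Log(g(x))=\Log(F_{\eta(g)}(x))=\eta(g)\cdot z$. The paper's argument ends there; it does not explicitly check that $\Log(x)\neq 0$, leaving this implicit.

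The rest of your proposal goes beyond the paper in supplying this nonvanishing, which the definition of ``crystalline positive'' does require. Your $\theta$-based route is essentially correct and is the cleanest way to do it, but it can be run entirely inside $\bcris^+$ without invoking the $\Fil$-adic topology of $\bdr^+$ (whose compatibility with the $p$-adic convergence used to define $\Log(x)$ is precisely the delicate point you flag): since $\theta:\bcris^+\to\Cp$ is $p$-adically continuous one has $\theta(\Log(x))=\Log(\theta(x))$; if this is nonzero we are done, and otherwise some $y=F_\pi^{\circ n}(x)\neq 0$ lies in $\ker\theta$, one writes $\Log(y)=y\cdot u$ in $\bcris^+$ with $u=1+\sum_{m\geq 2}a_m y^{m-1}$, and then $\theta(u)=1$ forces $u\neq 0$, hence $\Log(y)\neq 0$ in the domain $\bcris^+$, hence $\Log(x)=\pi^{-n}\Log(y)\neq 0$. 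Your first attempt, via ``topological nilpotence'' of $F_\pi^{\circ n}(x)$ in $\bcris^+$, is indeed harder to pin down for the reasons you identify, and the detour through units of $\bdr^+$ risks the topology mismatch; routing everything through $\theta$ avoids both issues.
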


\begin{proof}
If $g \in G_K$, then $g( \Log(x) ) = \Log (g(x)) = \Log(F_{\eta(g)}(x))  = \eta(g) \cdot \Log(x)$.
\end{proof}

\begin{coro}
\label{logex}
We have $\Log(x) =  \beta \cdot \prod_{\tau \in \Emb} (\tau \otimes \phi^{n(\tau)} )(t_\varpi)^{h_\tau}$ where $h_\tau \in \ZZ_{\geq 0}$ and $\beta \in K^\times$.
\end{coro}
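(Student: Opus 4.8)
The plan is to combine the two propositions of this section. By Proposition~\ref{etacris} (whose proof exhibits it) the element $\Log(x)$ is a \emph{nonzero} element of $K \otimes_{K_0} \bcris^+$ satisfying $g(\Log(x)) = \eta(g) \cdot \Log(x)$ for every $g \in G_K$. The character $\eta \colon G_K \to \OO_K^\times$ factors through $\Gal(K_\infty/K)$ by construction (corollary~\ref{extab}), so Proposition~\ref{cryschar} applies and yields integers $h_\tau \in \ZZ_{\geq 0}$ for $\tau \in \Emb$ with $\eta = \prod_{\tau \in \Emb} \tau \circ \chi_\varpi^{h_\tau}$, together with a nonzero $t_\varpi \in K \otimes_{K_0} \bcris^+$ such that $g(t_\varpi) = \chi_\varpi(g) \cdot t_\varpi$ for all $g \in G_K$.

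I would then put $z = \prod_{\tau \in \Emb} (\tau \otimes \phi^{n(\tau)})(t_\varpi)^{h_\tau} \in K \otimes_{K_0} \bcris^+$ and compute the $G_K$-action on it. Writing $\sigma_\tau = \tau \otimes \phi^{n(\tau)}$, recall that each $\sigma_\tau$ is a ring endomorphism of $K \otimes_{K_0} \bcris^+$ that is $\tau$-semilinear over $K$ and commutes with the action of $G_K$. Hence for $g \in G_K$ we have $g(\sigma_\tau(t_\varpi)) = \sigma_\tau(g(t_\varpi)) = \sigma_\tau(\chi_\varpi(g) \cdot t_\varpi) = \tau(\chi_\varpi(g)) \cdot \sigma_\tau(t_\varpi)$, the last step using the semilinearity and $\chi_\varpi(g) \in \OO_K^\times \subset K$. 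Raising to the $h_\tau$-th power and multiplying over $\tau \in \Emb$ gives $g(z) = \bigl( \prod_{\tau \in \Emb} \tau(\chi_\varpi(g))^{h_\tau} \bigr) \cdot z = \eta(g) \cdot z$. Moreover $z \neq 0$: each $\sigma_\tau$ is injective (as $\tau$ is an automorphism of $K$ and $\phi$ is injective on $\bcris^+$), so $\sigma_\tau(t_\varpi) \neq 0$, and a finite product of nonzero elements is nonzero because $K \otimes_{K_0} \bcris^+$ embeds into the field $\bdr$.

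Finally, viewing both $\Log(x)$ and $z$ inside $\bdr$, they are nonzero and $G_K$ acts on each through the same character $\eta$, so the ratio $\beta := \Log(x)/z$ lies in $\bdr^{G_K} = K$ and is nonzero. This gives $\Log(x) = \beta \cdot \prod_{\tau \in \Emb} (\tau \otimes \phi^{n(\tau)})(t_\varpi)^{h_\tau}$ with $\beta \in K^\times$ and $h_\tau \in \ZZ_{\geq 0}$, as required.

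I do not expect a genuine obstacle here: the whole argument is a short piece of bookkeeping once Propositions~\ref{etacris} and~\ref{cryschar} are in hand. The points that must be handled with a little care are the semilinearity computation that identifies the $G_K$-character of $z$ as exactly $\eta = \prod_\tau \tau \circ \chi_\varpi^{h_\tau}$ (rather than a conjugate of it), and the fact that $\Log(x) \neq 0$ --- but the latter is precisely what makes Proposition~\ref{etacris} meaningful, since being crystalline positive requires a nonzero eigenvector and its proof produces $\Log(x)$ as one.
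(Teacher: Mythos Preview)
Your proof is correct and follows essentially the same approach as the paper's own proof, which is a one-line argument citing exactly the three ingredients you use: the decomposition $\eta = \prod_\tau \tau \circ \chi_\varpi^{h_\tau}$ from Proposition~\ref{cryschar}, the eigenvector relation $g(t_\varpi) = \chi_\varpi(g)\, t_\varpi$, and $\bdr^{G_K} = K$. You have simply made explicit the semilinearity computation for $\sigma_\tau = \tau \otimes \phi^{n(\tau)}$ and the nonvanishing of the product $z$, both of which the paper leaves to the reader.
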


\begin{proof}
This follows from the facts that $\eta= \prod_{\tau \in \Emb} \tau \circ \chi_\varpi^{h_\tau}$, that $\chi_\varpi(g) = g(t_\varpi)/t_\varpi$ and that $\bdr^{G_K} = K$.
\end{proof}

\begin{prop}
\label{philog}
We have $\phiq(\Log(x)) = \mu \cdot \Log(x)$ for some $\mu \in \pi \OO_K$.
\end{prop}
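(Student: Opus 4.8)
The plan is to extract $\phiq(\Log(x))$ directly from the explicit description of $\Log(x)$ obtained in Corollary \ref{logex}, and then to check that the scalar that appears lies in $\pi\OO_K$.

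Concretely, I would start from the identity $\Log(x) = \beta \cdot \prod_{\tau \in \Emb} (\tau \otimes \phi^{n(\tau)})(t_\varpi)^{h_\tau}$ of Corollary \ref{logex}, with $\beta \in K^\times$ and $h_\tau \in \ZZ_{\geq 0}$. The map $\phiq$ is $K$-linear on $K \otimes_{K_0} \bcris^+$, so it fixes $\beta$, and by construction (see the discussion in Section \ref{charid}) it commutes with each operator $\tau \otimes \phi^{n(\tau)}$. Combining this with the relation $\phiq(t_\varpi) = \varpi \cdot t_\varpi$ from Proposition \ref{cryschar}, one gets
\[ \phiq\bigl((\tau \otimes \phi^{n(\tau)})(t_\varpi)\bigr) = (\tau \otimes \phi^{n(\tau)})(\varpi \cdot t_\varpi) = \tau(\varpi) \cdot (\tau \otimes \phi^{n(\tau)})(t_\varpi), \]
and multiplying over $\tau \in \Emb$ yields $\phiq(\Log(x)) = \mu \cdot \Log(x)$ with $\mu = \prod_{\tau \in \Emb} \tau(\varpi)^{h_\tau}$.

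It then remains to see that $\mu \in \pi\OO_K$. Since $K/\Qp$ is Galois, each $\tau(\varpi)$ is again a uniformizer of $\OO_K$; hence $\mu \in \OO_K$ and $\val(\mu) = \sum_{\tau \in \Emb} h_\tau$. Thus $\mu \in \pi\OO_K$ precisely when the integers $h_\tau$ do not all vanish. If they did all vanish, then $\eta = \prod_{\tau \in \Emb} \tau \circ \chi_\varpi^{h_\tau}$ would be the trivial character, contradicting the fact (Corollary \ref{extab}) that $\eta : \Gal(K_\infty/K) \to \OO_K^\times$ is an isomorphism onto the nontrivial group $\OO_K^\times$. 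Therefore $\sum_{\tau} h_\tau \geq 1$ and $\mu \in \pi\OO_K$.

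The argument is essentially a formal consequence of Corollary \ref{logex}, so I do not anticipate a real difficulty; the two points that genuinely use the hypotheses are the integrality of $\mu$ (which is why $K/\Qp$ is taken to be Galois, so that the conjugates $\tau(\varpi)$ remain uniformizers) and the non-vanishing of $\sum_\tau h_\tau$ — this last point being the only place where the fact that $\calF$ is a genuine full family, reflected in $\eta$ being an isomorphism onto $\OO_K^\times$, is actually needed.
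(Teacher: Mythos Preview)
Your argument is correct and matches the paper's proof essentially line for line: the paper also derives $\mu = \prod_{\tau} \tau(\varpi)^{h_\tau}$ from Corollary~\ref{logex} and Proposition~\ref{cryschar}, and concludes $\mu \in \pi\OO_K$ from the nontriviality of $\eta$. Your write-up simply spells out the intermediate steps (commutation of $\phiq$ with $\tau \otimes \phi^{n(\tau)}$, the role of the Galois hypothesis) that the paper leaves implicit.
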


\begin{proof}
Corollary \ref{logex} and proposition \ref{cryschar} imply the proposition with $\mu = \prod_\tau \tau(\varpi)^{h_\tau}$, and not all $h_\tau$ can be equal to $0$ since $\eta \neq 1$.
\end{proof}

\begin{coro}
\label{phiexp}
We have $\phiq(x) = F_\mu(x)$.
\end{coro}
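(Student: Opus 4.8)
The plan is to combine Proposition~\ref{philog}, which gives $\phiq(\Log(x)) = \mu \cdot \Log(x)$, with the functional equation $\Log \circ F_\mu(T) = \mu \cdot \Log(T)$ of Proposition~\ref{lublog}, and then to ``cancel $\Log$''.

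First I would check that $\phiq(x)$ and $F_\mu(x)$, like $x$ itself, lie in the open unit disk of $\atplus$, so that $\Log$ can be evaluated at them inside $K \otimes_{K_0} \bcris^+$ exactly as in the discussion preceding the corollary. For $F_\mu(x)$ this is immediate since $F_\mu(T) \in T \cdot \OO_K \dcroc T$ and $x$ is topologically nilpotent; for $\phiq(x)$ it follows from the fact that $\phiq$ is a continuous endomorphism of $\atplus$ sending its maximal ideal into itself. Since $\phiq$ is continuous and $K$-linear, it commutes with the evaluation at $x$ of the convergent power series $\Log(T) = \sum_{n \geq 1} a_n T^n$, so that $\Log(\phiq(x)) = \phiq(\Log(x))$. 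Combining this with Proposition~\ref{philog} and with $\Log(F_\mu(x)) = \mu \cdot \Log(x)$, which comes from Proposition~\ref{lublog}, I obtain $\Log(\phiq(x)) = \Log(F_\mu(x))$.

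It then remains to see that $\Log$ is injective on the maximal ideal of $\atplus$. For this I would use that, for $a$ and $b$ in the maximal ideal,
\[ \Log(a) - \Log(b) = (a-b) \cdot \sum_{n \geq 1} a_n \bigl(a^{n-1} + a^{n-2}b + \cdots + b^{n-1}\bigr) ; \]
the series on the right converges there (it is controlled by the derivative series $\Log'(T) = \sum_{n \geq 1} n a_n T^{n-1}$, which again converges on the open unit disk) and is congruent to $a_1 = 1$ modulo the maximal ideal, hence is a unit of $\atplus$. Taking $a = \phiq(x)$ and $b = F_\mu(x)$, and using that $\atplus$ embeds into the integral domain $K \otimes_{K_0} \bcris^+$, I conclude that $\phiq(x) - F_\mu(x) = 0$, which is the assertion.

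The main obstacle is this last injectivity step: one has to be careful with the weak topology on $\atplus$ in order to justify that the auxiliary series $\sum_{n \geq 1} a_n(a^{n-1}+\cdots+b^{n-1})$ converges at topologically nilpotent arguments and to recognise it as a unit, and one uses that the ambient ring is an integral domain in order to cancel it. Everything else is a formal manipulation, resting on the same convergence facts that were already invoked before the statement of the corollary.
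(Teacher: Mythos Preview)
Your reduction to $\Log(\phiq(x)) = \Log(F_\mu(x))$ is fine; the trouble is entirely in the ``cancel $\Log$'' step, and it is a real gap, not just a matter of topological bookkeeping.

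The two-variable series you write down, $D(X,Y)=\sum_{n\geq 1} a_n\,(X^{n-1}+\cdots+Y^{n-1})$, has as coefficient of $X^iY^j$ precisely $a_{i+j+1}$. These are \emph{not} in $\OO_K$ in general: what one knows is that $\Log'(T)=\sum n\,a_n T^{n-1}\in\OO_K\dcroc{T}^\times$ (this follows from $F_\pi'(T)/\pi\in\OO_K\dcroc{T}^\times$), but $a_n$ itself need not be integral---already for the multiplicative group $a_n=(-1)^{n-1}/n$. So your series does not take values in $\atplus$, and the sentence ``congruent to $a_1=1$ modulo the maximal ideal, hence a unit of $\atplus$'' has no meaning. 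The comparison with $\Log'(T)$ only controls the diagonal $D(T,T)$, not $D(X,Y)$ itself.

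More seriously, $\Log$ is genuinely \emph{not} injective on topologically nilpotent elements: every $u\in\Lambda_n$ satisfies $\Log(u)=0$, so for instance $D(u,0)=\Log(u)/u=0$ in $\Cp$. Thus even after passing to $\bdr^+$ your cofactor can land in the maximal ideal, and there is no reason, without further input, why $D(\phiq(x),F_\mu(x))$ should be a unit anywhere.

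This is exactly the obstruction the paper flags (``we would like to apply $\Log^{\circ -1}$ but we have to mind the convergence''). Its remedy is different from yours: one first applies a suitable twist $\tau\otimes\phi^{n(\tau)}$ so that the common value $\Log(\cdots)$ becomes divisible by $t_\varpi$ and hence lies in $\Fil^1\bdr$; since $\bdr^+$ is $\Fil^1$-adically complete, \emph{any} formal power series over $K$---in particular the compositional inverse $\Log^{\circ -1}$---converges on $\Fil^1$, and one can then undo $\Log$ legitimately. The passage through the filtration on $\bdr$ is the missing idea in your argument.
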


\begin{proof}
Proposition \ref{philog} implies that $\Log(\phiq(x)) = \Log(F_\mu(x))$. We would like to apply $\Log^{\circ -1}(T)$ but we have to mind the convergence and need to proceed as follows. Since $\eta$ is nontrivial, there is a $\tau \in \Emb$ such that $h_{\tau^{-1}} \geq 1$. We have 
\[ (\tau \otimes \phi^{n(\tau)}) (\Log(\phiq(x))) = (\tau \otimes \phi^{n(\tau)}) (\Log(F_\mu(x))) \] 
in $K \otimes_{K_0} \bcris^+$ and $h_{\tau^{-1}} \geq 1$ now implies that $(\tau \otimes \phi^{n(\tau)}) (\Log(\phiq(x)))$ is divisible by $t_\varpi$ so that by proposition \ref{cryschar}, it belongs to  $\Fil^1 \bdr$. We can then apply $\Log^{\tau \circ -1}(T)$ in $\bdr$ and get that $(\tau \otimes \phi^{n(\tau)}) (\phiq(x)) = (\tau \otimes \phi^{n(\tau)}) (F_\mu(x))$ in $\bdr$. This equality also holds in $\atplus$, so that $\phiq(x) = F_\mu(x)$.
\end{proof}

\begin{theo}
\label{lubconj}
There is a Lubin-Tate formal group $G$ such that $F_\alpha(T) \in \End(G)$ for all $\alpha \in \OO_K$.
\end{theo}

\begin{proof}
By corollary \ref{phiexp}, we have $\phiq(x) = F_\mu(x)$. This implies that $F_\mu(T) \equiv T^q \bmod{\pi \OO_K \dcroc{T}}$. The Weierstrass degree of $F_\mu(T)$ is $q^{\val(\mu)}$ so that $\val(\mu)=1$ and $F_\mu(T)$ is a Lubin-Tate power series. By \cite{LT65}, there is a Lubin-Tate formal group $G$ such that $F_\mu(T) \in \End(G)$. Since $F_\alpha(T)$ commutes with $F_\mu(T)$, we also have $F_\alpha(T) \in \End(G)$ for all $\alpha \in \OO_K$.
\end{proof}

\begin{rema}
\label{wrapup}
We have $\mu=\varpi$ and $\eta=\chi_\varpi$. Indeed, the extension $K_\infty/K$ is generated by the torsion points of $G$, and is therefore attached to $\mu$ by local class field theory, so that $\mu=\varpi$. This in turn implies that $\eta=\chi_\varpi$.
\end{rema}

\providecommand{\bysame}{\leavevmode ---\ }
\providecommand{\og}{``}
\providecommand{\fg}{''}
\providecommand{\smfandname}{\&}
\providecommand{\smfedsname}{\'eds.}
\providecommand{\smfedname}{\'ed.}
\providecommand{\smfmastersthesisname}{M\'emoire}
\providecommand{\smfphdthesisname}{Th\`ese}

\end{document}